\newtheorem{theorem}{Theorem}[section]
\newtheorem{proposition}[theorem]{Proposition}
\theoremstyle{definition}
\theoremstyle{remark}
\newtheorem{remark}[theorem]{Remark}
\numberwithin{equation}{section}
\newcommand{\al}{\alpha}
\newcommand{\be}{\beta}
\newcommand{\la}{\lambda}
\newcommand{\te}{\theta}
\newcommand{\vp}{\varphi}
\newcommand{\De}{\Delta}
\newcommand{\Ga}{\Gamma}
\newcommand{\Si}{\Sigma}
\def\RR{\mathbb{R}}
\renewcommand\SS{\mathbb{S}}
\newcommand{\cT}{{\mathcal T}}
\newcommand{\pd}{\partial}
\newcommand\minus\backslash
\newcommand\lan\langle
\newcommand\ran\rangle
\DeclareMathOperator\Div{div}
\renewcommand\leq\leqslant
\renewcommand\geq\geqslant
\newlength{\intwidth}
 \DeclareMathOperator\curl{curl}
\begin{document}

\title[Beltrami fields with a nonconstant factor are rare]{Beltrami fields with a nonconstant proportionality factor are rare}

\author{Alberto Enciso}
\address{Instituto de Ciencias Matem\'aticas, Consejo Superior de
  Investigaciones Cient\'\i ficas, 28049 Madrid, Spain}
\email{aenciso@icmat.es}

\author{Daniel Peralta-Salas}
\address{Instituto de Ciencias Matem\'aticas, Consejo Superior de
  Investigaciones Cient\'\i ficas, 28049 Madrid, Spain}
\email{dperalta@icmat.es}

%
%
\begin{abstract}
We consider the existence of Beltrami fields with a nonconstant proportionality factor $f$
in an open subset $U$ of $\RR^3$. By reformulating this problem as a
constrained evolution equation on a surface, we find an explicit
differential equation that $f$ must satisfy whenever there is a
nontrivial Beltrami field with this factor. This ensures that there
are  no nontrivial regular solutions for an open
and dense set of factors $f$ in the $C^k$~topology. In particular,
there are no nontrivial Beltrami fields whenever $f$ has a regular
level set diffeomorphic to the sphere. This provides an explanation
of the helical flow paradox of Morgulis, Yudovich and Zaslavsky ({\em
  Comm.\ Pure Appl.\ Math.}~48 (1995) 571--582).
\end{abstract}
\maketitle

\section{Introduction}
\label{S:intro}

A {\em Beltrami field}\/ is a vector field $u$ in $\RR^3$ such that
\begin{equation}\label{Beltrami}
\curl u= fu\,,\qquad \Div u=0\,,
\end{equation}
where $f$ is a smooth function. The condition that $u$ be
divergence-free is redundant when the proportionality factor $f$ is a
nonzero constant (i.e., in the case of {\em strong}\/ Beltrami fields), while
otherwise it is tantamount to demanding that the function $f$ be a first integral of
$u$, that is,
\begin{equation}\label{firstint}
u\cdot \nabla f=0\,.
\end{equation}

Beltrami fields have been studied since the XIX~century because of
their connection with the Euler equation and with
magnetohydrodynamics, where they are known as force-free
fields. Indeed, it is well known that a Beltrami field is also a
solution of the steady Euler equation in $\RR^3$,
\[
(u\cdot\nabla)u=-\nabla P\,,\qquad \Div u=0
\]
with $P=-\frac12|u|^2$, and actually the analysis of concrete examples of Beltrami fields
with constant proportionality factor such as the ABC
flows~\cite{AK99} has yielded considerable insight e.g.\ into the
phenomenon of Lagrangian turbulence~\cite{Dombre}.

Beyond the study of explicit examples, Beltrami fields with constant
proportionality factor have found application as powerful tools to
analyze the structure of solutions to the Euler equation. For
instance, de Lellis and Sz\'ekelyhidi have utilized strong Beltrami
fields to construct H\"older continuous weak solutions to the Euler
equation in the $3$-torus that dissipate
energy~\cite{Lellis09,Lellis13}, while in Refs.~\cite{Annals,arxiv} we
constructed strong Beltrami fields in $\RR^3$ having vortex lines and
vortex tubes (that is, integral curves and invariant tori) of
arbitrary topology. Expansions of more general solutions to the Euler
equation in terms of strong Beltrami fields were also considered
in~\cite{Constantin}. 

On the contrary, Beltrami fields with nonconstant proportionality
factor have not found as many applications, and indeed to the best of
our knowledge there are just a handful of explicit examples, all of
which have Euclidean symmetries. In fact, the analysis of Beltrami
fields with nonconstant factor has proved
to be extremely hard, as one can infer from the striking lack of
results in this classical subject. An interesting contribution in this
direction is the construction of low-regularity Beltrami fields with H\"older-continuous
nonconstant factors in~\cite{Bineau,CMP00}.

More precisely, the key question, sometimes called the helical flow
problem, is to ascertain for which functions $f$ there is a nontrivial
vector field satisfying the Eq.~\eqref{Beltrami}. In this regard, a
challenging observation due to Morgulis, Yudovich and
Zaslavsky~\cite{Yudovich} is that one would naively expect ``most'' Beltrami
fields to admit a first integral, since this happens whenever the
function $f$ is nonconstant as a consequence of
Eq.~\eqref{firstint}. These authors refer to this phenomenon as the helical flow paradox.
Physically, this means that the fluid flow defined by a Beltrami field
would generically be laminar, in contrast with the physical
intuition that the fluid should typically present a turbulent
behavior~\cite{Pelz}.

However, since the first integral condition~\eqref{firstint} is
very restrictive, it stands to reason that the Eq.~\eqref{Beltrami}
should not admit any nontrivial solutions for most functions~$f$. Our
objective in this paper is to make this idea precise.

Specifically, our main result asserts that, for a generic function
$f$, the only vector field~$u$ satisfying Eq.~\eqref{Beltrami} is the
trivial one, $u\equiv 0$. This provides an explanation of the helical
flow paradox, as it shows that the hypothetical laminar flow
associated to a nonconstant proportionality factor does not exist
generically. In particular, there are no nontrivial local
Beltrami fields with factor~$f$ unless $f$ belongs to a set of infinite codimension in the
$C^k$~topology. The set of functions for which there can be
nontrivial Beltrami fields is contained in the kernel of certain
complicated nonlinear differential operator that one can 
compute explicitly. Therefore, as a byproduct we obtain an effective necessary condition for
$f$ to admit nontrivial solutions as will be illustrated in
Propositions~\ref{P.example1} and~\ref{P.example2} below.

\begin{theorem}\label{T.main}
  Let $U\subseteq\RR^3$ be a domain and assume that the function $f$
  is nonconstant and of class~$C^{6,\al}$. Suppose that the vector
  field~$u$ satisfies the Eq.~\eqref{Beltrami} in~$U$. Then there is a
  nonlinear partial differential operator $P\neq0$, which can be
  computed explicitly and involves derivatives of order at most~$6$,
  such that $u\equiv 0$ unless $P[f]$ is identically zero in $U$. In
  particular, $u\equiv0$ for all $f$ in an open and dense subset of
  $C^{k}(U)$ with any $k\geq 7$.
\end{theorem}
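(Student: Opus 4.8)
The plan is to localize near a regular point of $f$, recast the Beltrami system there as a first-order evolution equation along the level sets of $f$ together with a single scalar constraint, and then show that propagating that constraint forces either $u\equiv0$ or a differential identity $P[f]=0$; the passage from ``$u\equiv0$ near a regular point'' to ``$u\equiv0$ on $U$'' is handled by unique continuation. Concretely, since $f$ is nonconstant on the connected set $U$, the set $R=\{\nabla f\neq0\}$ is open and nonempty. Taking the curl of \eqref{Beltrami} and using $\Div u=0$ gives $\Delta u+f^2u+\nabla f\times u=0$, an elliptic system with $C^{5,\al}$ coefficients, so by Aronszajn's unique continuation theorem it suffices to prove that $u$ vanishes on some nonempty open subset of $R$. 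Moreover $\Div(\curl u)=0$ together with \eqref{Beltrami} yields $u\cdot\nabla f=0$, so $u$ is everywhere tangent to the level surfaces of $f$.

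\textbf{Reduction to a constrained evolution.} Fix $p\in R$ and, on a small ball $V\ni p$, introduce coordinates $(t,y^1,y^2)$ with $t=f$ and with the coordinate vector field $\pd_t$ equal to $\nabla f/|\nabla f|^2$, so the curves $\{y=\mathrm{const}\}$ cross the foliation $\Sigma_t=\{f=t\}$ with unit $t$-speed. For $\al:=u^\flat$ one has $\al(\pd_t)=(u\cdot\nabla f)/|\nabla f|^2=0$, so $\al=\bar b$ is a $t$-dependent $1$-form on the leaves. Since $\dd\al=\star\bigl((\curl u)^\flat\bigr)=f\star\al$, and $\star\bar b$ is proportional to $\dd t$ (because $\bar b$ annihilates $\nabla f$), splitting this identity into its $\dd t$-part and its $\dd t$-free part yields an evolution equation of the schematic form $\pd_t\bar b=\la\,\star_{h_t}\bar b$ --- where $h_t$ is the induced metric on $\Sigma_t$, $\star_{h_t}$ its Hodge star on $1$-forms, and $\la$ is built algebraically from $f$ and $|\nabla f|$ --- together with the single scalar constraint $\dd_{\Sigma_t}\bar b=0$, i.e.\ the intrinsic vorticity of $u$ on each leaf vanishes. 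The condition $\Div u=0$ is automatically recovered from this system.

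\textbf{The constraint generates an over-determined system.} Differentiating the constraint in $t$ and substituting the evolution equation gives $0=\dd_{\Sigma_t}(\la\,\star_{h_t}\bar b)=\bigl(\nabla_{h_t}\la\cdot u\pm\la\,\Div_{h_t}u\bigr)\omega_{h_t}$, a \emph{new} first-order relation whose coefficients cost one or two more derivatives of $f$. Iterating --- each step: differentiate the most recently produced relation in $t$, replace $\pd_t\bar b$ by $\la\,\star_{h_t}\bar b$, and use the relations already obtained (in particular $\curl_{h_t}u=0$) to eliminate tangential derivatives of $u$ --- one finds, after a bounded number of steps, that all first derivatives of $u$ along the leaves, and hence (via the evolution equation) all derivatives of $u$, become algebraic in $u$; equivalently $u$ satisfies a closed first-order system $\grad u=\mathcal M u$ on $V$. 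At each step the relation either is nondegenerate in $u$, so that a nonzero linear algebraic combination $A[f]\,u=0$ appears and $u$ vanishes wherever $\det A[f]\neq0$, or it imposes no new constraint on $u$ but a new differential constraint on $f$; collecting the determinants and the Frobenius integrability obstructions of $\grad u=\mathcal M u$ produces a single nonlinear operator $P$, of order at most $6$ in $f$ (here $f\in C^{6,\al}$ guarantees that the coordinate change and the successive differentiations make sense and that $P[f]$ is continuous), such that $u\equiv0$ on $V\cap\{P[f]\neq0\}$ --- hence on all of $U$ by unique continuation --- unless $P[f]\equiv0$. The genuinely hard part of the argument is precisely this step: one must carry out the iteration in closed form, check that it terminates after finitely many differentiations with a total cost of at most six derivatives of $f$, and --- most delicate --- verify that the resulting $P$ is not identically the zero operator; the latter amounts to exhibiting one nonconstant $f$ for which the reduction really collapses $u$ to $0$, which is subtle because all classically known Beltrami fields with nonconstant factor are Euclidean-symmetric and therefore lie in the degenerate locus $\{P[f]=0\}$.

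\textbf{Genericity.} Because $P$ has order at most $6$, the map $f\mapsto P[f]$ is continuous from $C^k(U)$ into $C^0(U)$ for every $k\geq7$, so $S:=\{f\in C^k(U):P[f]\equiv0\text{ on }U\}$ is closed. It has empty interior: a $C^k$-ball contained in $S$ would force $P$ to vanish on an open set of functions, and evaluating $P$ and its Fr\'echet derivatives on polynomials of arbitrarily high degree (whose jets are dense) would then force $P\equiv0$ as an operator, contradicting $P\neq0$. Hence $C^k(U)\setminus S$ is open and dense, and for every $f$ in it the previous step gives $u\equiv0$.
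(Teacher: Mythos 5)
Your setup coincides with the paper's: localize near a regular point of $f$, pass to coordinates in which $t=f$ and the transverse field is $\nabla f/|\nabla f|^2$, note that $u^\flat$ is a $t$-dependent $1$-form on the level surfaces satisfying an evolution equation of Hodge-star type together with the constraint that it be closed on each leaf, and finish with Aronszajn unique continuation and a continuity/density argument in $C^k$. Up to that point the proposal is sound. But the heart of the theorem is the step you describe only schematically and explicitly leave unverified: actually producing an explicit operator $P$ of order at most $6$ and showing $P\neq 0$. Your iteration ("after a bounded number of steps all derivatives of $u$ become algebraic in $u$, so $u$ solves a closed system $\nabla u=\cM u$, and one collects determinants and Frobenius obstructions") is asserted, not carried out: you do not show the process terminates, that the total cost is six derivatives of $f$, or—as you yourself concede—that the resulting $P$ is not the zero operator. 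Without $P\neq0$ the conclusion "$u\equiv0$ unless $P[f]\equiv0$" is vacuous and the open-and-dense statement fails, so this is a genuine gap rather than an omitted routine detail.

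For comparison, the paper closes this gap with a concrete linear-algebra mechanism that avoids any closed system for $u$: differentiating the constraint \eqref{dal=0} in $t$ and using \eqref{pdtal} gives the hierarchy $d(T_n\be)=0$ with $T_1=T$, $T_{n+1}=\pd_tT_n+T_nT$ (Proposition~\ref{P.constraint}); the $n=1$ constraint, thanks to the strict positivity of $T^2_1$ in \eqref{Tcomp}, is solved for $\pd_2\be_2$, after which every constraint with $n\geq2$ becomes a linear relation $\cT_n\cdot\Ga=0$ on the fixed $4$-vector $\Ga=(\be_1,\be_2,\pd_1\be_1,\pd_2\be_1)$, with $\cT_n$ as in \eqref{cTn}. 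A nonzero $u$ forces $\Ga\not\equiv0$, hence the degeneracy \eqref{det}, and $P[f]$ is defined leafwise by \eqref{detPf}; the order count ($\cT_5$ involves at most six derivatives of $f$) is immediate from the recursion. Crucially, the nontriviality $P\neq0$ is not obtained abstractly but by explicit evaluation on concrete families of factors (Propositions~\ref{P.example1} and~\ref{P.example2}), which is exactly the computation your outline defers. A secondary, fixable point: your density argument tacitly uses that $P[f](x)$ depends real-analytically on the $6$-jet of $f$ where $\nabla f\neq0$ in order to pass from vanishing on a $C^k$-ball to $P\equiv0$; this is true for the determinant \eqref{detPf} but cannot be invoked for an operator you have not constructed.
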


It should be noticed that Theorem~\ref{T.main} is of purely {\em
  local}\/ nature, as it provides obstructions for the existence of
nontrivial Beltrami fields in any open set and most proportionality
factors. On the other hand, Nadirashvili~\cite{Nad14} has recently
proved a global obstruction in the form of a Liouville theorem for
Eq.~\eqref{Beltrami}, which shows that, for any factor~$f$, there are no Beltrami fields in
the whole space~$\RR^3$ falling off fast enough at infinity.

An easy consequence of the proof of the main result is that if~$f$ has
a regular level set diffeomorphic to the sphere, then the
Eq.~\eqref{Beltrami} does not have any nontrivial solutions. In
particular, there are no Beltrami fields whenever $f$ has local
extrema or is a radial function. This is related to the
classical theorem of Cowling ensuring that
there are no poloidal Beltrami fields with nonconstant
factor and axial symmetry~\cite{Chandra}. Observe that the obstruction
to the existence of solutions with a factor having a spherical level
set does not follow from Arnold's structure theorem because the
vorticity and the velocity are collinear.

\begin{theorem}\label{T.spheres}
Suppose that the function $f$ is
of class $C^{2,\al}$ in a domain $U\subseteq\RR^3$. If a regular level set $f^{-1}(c)$ has a connected component
 in $U$ diffeomorphic to $\SS^2$, then any solution to the
  Eq.~\eqref{Beltrami} in $U$ is identically zero. 
\end{theorem}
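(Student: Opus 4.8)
The plan is to argue directly on the spherical component $\Si$ of the level set $f^{-1}(c)$, rather than invoking the sixth-order reduction behind Theorem~\ref{T.main} (which in any case would require more regularity than the $C^{2,\al}$ assumed here). First I would record that $u$ is automatically tangent to the level sets of $f$: since $\Div\curl u=0$, Eq.~\eqref{Beltrami} gives $0=\Div(fu)=\nabla f\cdot u+f\,\Div u=\nabla f\cdot u$. Consequently $u$ restricts to a tangent vector field on the closed surface $\Si$, and the object to study is the $1$-form $\al$ on $\Si$ dual to $u|_\Si$ with respect to the metric induced from $\RR^3$ (which, since $u\cdot n=0$ on $\Si$, coincides with the restriction of $u^\flat$).

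The crux is that $\al$ satisfies two closedness conditions. On the one hand, the $2$-form $\dd(u^\flat)$ on $\RR^3$ represents $\curl u$, so its pullback to $\Si$ equals $(\curl u\cdot n)\,\dd S_\Si=f\,(u\cdot n)\,\dd S_\Si=0$, where $n=\nabla f/|\nabla f|$ and $\dd S_\Si$ is the area form; hence $\dd\al=0$. On the other hand, in a tubular neighborhood of $\Si$ one has the coarea splitting $\dd x^1\wedge\dd x^2\wedge\dd x^3=|\nabla f|^{-1}\,\dd S_{\{f=t\}}\wedge\dd f$, and since $\iota_u\,\dd f=0$ the equation $\Div u=0$ forces, leaf by leaf, that $\ga:=\iota_u\bigl(|\nabla f|^{-1}\dd S_\Si\bigr)$ is a closed $1$-form on $\Si$; because on a surface $\iota_u\,\dd S_\Si=\star\al$, this says $|\nabla f|^{-1}\star\al$ is closed as well.

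Now the topology of $\SS^2$ enters decisively. Since $H^1(\SS^2;\RR)=0$, the closed form $\al$ is exact, $\al=\dd h$ for some function $h$ on $\Si$. Combining with $\dd\ga=0$ and integrating by parts on the closed surface $\Si$ gives $0=\int_\Si\dd h\wedge\ga=\int_\Si|\nabla f|^{-1}\,|\dd h|^2\,\dd S_\Si$, and since $|\nabla f|>0$ on the regular level set we conclude $\dd h\equiv0$, hence $\al\equiv0$ and $u\equiv0$ on $\Si$. (Equivalently, $u$ restricted to each leaf is a harmonic field for a positive-weight Laplacian on a genus-zero surface, so it must vanish.) To pass from $u|_\Si=0$ to $u\equiv0$ on $U$, note that the vanishing of $u$ on $\Si$ makes all its tangential derivatives vanish there; then $\curl u|_\Si=fu|_\Si=0$ kills the tangential part of $\partial_n u$ and $\Div u|_\Si=0$ kills its normal part, so $u$ has vanishing Cauchy data on $\Si$. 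Taking the curl of Eq.~\eqref{Beltrami} yields the linear elliptic system $\Delta u=-f^2u-\nabla f\times u$, so Aronszajn's unique continuation theorem forces $u\equiv0$ on the connected domain $U$. (Alternatively, the level sets near $\Si$ are also diffeomorphic to $\SS^2$, so the argument above already gives $u\equiv0$ on a full neighborhood of $\Si$, and one only needs unique continuation from an open subset.)

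The step I expect to be the main obstacle is the second closedness relation — correctly descending $\Div u=0$ to the leaves of the level-set foliation, where the coarea weight $|\nabla f|^{-1}$ shows up; the rest is a short Stokes computation, the vanishing of $H^1(\SS^2)$, and an off-the-shelf unique continuation theorem. I would also emphasize that genus zero is essential here: on a toroidal leaf $\al$ could be a nontrivial harmonic $1$-form, and this is exactly the mechanism underlying the few known (symmetric) examples of Beltrami fields with a nonconstant factor.
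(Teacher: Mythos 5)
Your proposal is correct, and although it ultimately rests on the same two facts as the paper's proof, it reaches them by a more direct, coordinate-free route. The paper deduces the theorem from the constrained-evolution formulation of Section~\ref{S.evolution}: in the adapted coordinates $(t,\xi)$ the constraint~\eqref{dal=0} gives $\be=d\psi$ on each spherical leaf, and Proposition~\ref{P.constraint} with $n=1$ yields $d(T\be)=0$, which by~\eqref{T} is exactly the statement that $\chi\,{*_t}\be=|\nabla f|^{-1}{*_t}\be$ is closed on the leaf; this becomes the weighted Laplace equation $\De_t\psi+\langle\nabla_t\log\chi,\nabla_t\psi\rangle_t=0$, dispatched by the maximum principle on the closed surface, followed by unique continuation. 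Your two closedness relations are precisely these two constraints: $d\al=0$ is the tangential part of the curl equation (the constraint~\eqref{constraint2}), and the closedness of $|\nabla f|^{-1}\star\al$ is the paper's $d(T\be)=0$, but you derive it directly from $\Div u=0$ through the coarea factorization of the volume form rather than by propagating the constraint along the evolution; and you finish with the energy identity $\int_\Si|\nabla f|^{-1}\,|dh|^2\,dS=0$ (the divergence-form equation tested against the potential) instead of the maximum principle. What your route buys: it bypasses the adapted coordinate system and the constraint-propagation machinery entirely, works comfortably at the stated $C^{2,\al}$ regularity, and makes transparent that only $d\al=0$, the weighted coclosedness, and $H^1(\SS^2)=0$ are used — which also substantiates your closing remark about toroidal leaves. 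What the paper's route buys: the same formalism drives Theorem~\ref{T.main} and the whole hierarchy of constraints, so there the sphere case comes essentially for free. Of your two endgames, alternative (b) — nearby leaves are again spheres, so $u$ vanishes on a neighborhood of $\Si$ and one applies unique continuation from an open set — is exactly what the paper does and is the cleaner choice; alternative (a), unique continuation from zero Cauchy data on the single surface $\Si$, is also valid (e.g.\ extend $u$ by zero across $\Si$ to get an $H^2$ solution of the differential inequality $|\De u|\le C|u|$ vanishing on an open set, then apply Aronszajn), but it invokes a slightly heavier statement than is needed.
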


Before passing to discuss the proof of these results, a few
comments are in order. Firstly, notice that the reason for which we
have not made any regularity assumptions on $u$ is that it
automatically satisfies the elliptic equation
\[
\De u+\nabla f\times u+f^2 \, u=0\,,
\]
which ensures that $u$ is of class $C^{k+1,\al}$ if $f$ is
$C^{k,\al}$. Furthermore, this shows that $u$ satisfies the unique
continuation property, so $u$ is identically zero in its domain
if it vanishes in any open subset. Secondly, an interesting
consequence of the proof of these results is that the theorems
remain valid if we assume instead that $u$ is a strong Beltrami field,
satisfying
\[
\curl u =\la u
\]
for some nonzero constant $\la$, and $f$ is a first integral in
$U$. Therefore, the first integrals of a strong Beltrami field are
also severely restricted. Thirdly, all the results and proofs remain
valid for the Beltrami equation in an arbitrary Riemannian
3-manifold, but we have restricted ourselves to Euclidean space to
simplify the exposition.

The proof of these theorems, given in Section~\ref{S.proofs}, is based on formulating the Beltrami
equation~\eqref{Beltrami} as a constrained evolution problem. Although
the underlying mathematics are relatively unsophisticated, we regard
this reformulation as the main contribution of the paper. Indeed, one can
show that the Eq.~\eqref{Beltrami} is locally equivalent, in a sense
to be made precise later on, to the assertion that there is a
time-dependent 1-form $\be(t)$ on a surface $\Si$ that satisfies the
evolution equation
\begin{equation}\label{evolution}
\pd_t\be=T(t)\, \be
\end{equation}
together with the constraint
\begin{equation}\label{constraint}
d\be=0\,.
\end{equation}
Here $T(t)$ is a time-dependent tensor field that depends on $f$ and the exterior differential
$d$ is computed with respect to the coordinates on the surface $\Si$,
which, in turn, is a regular level set of $f$. It should be stressed
that this formulation depends strongly on the choice of coordinates; full details are given
in Section~\ref{S.evolution}.

This formulation lays bare the reason for which the Beltrami equation does not
generally admit nonzero solutions: the evolution~\eqref{evolution} is
not generally compatible with the constraint~\eqref{constraint}, and
the resulting compatibility conditions translate into equations that
$f$ and its derivatives must satisfy. In Theorems~\ref{T.main}
and~\ref{T.spheres} we have presented the first two of these
compatibility conditions, but in fact there is a whole hierarchy of
explicitly computable obstructions (with increasingly
cumbersome expressions). Furthermore, it provides an appealing explanation,
without even resorting to the statement of the previous theorems, of
the reason for which the attempts at constructing solutions
to~\eqref{Beltrami} using variational techniques have failed: while
the regularity of the equation is indeed determined by an elliptic
system, its existence is in fact controlled by a constrained
evolution problem for which the existence theory is ill posed.

To conclude, let us emphasize that the key to the obstructions for the
existence of nontrivial solutions to the Eq.~\eqref{Beltrami} is
indeed the requirement that $u$ be divergence-free. In fact, in
Section~\ref{S.remarks} we will show that if this condition is
omitted, there are always solutions in the whole space $\RR^3$
provided that the function $f$ is positive. This case
corresponds to a compressible fluid flow, with $f$ playing the role of
the density of the fluid.

\section{The Beltrami equation as a constrained evolution}
\label{S.evolution}

Our goal in this section is to reformulate the Beltrami
equation~\eqref{Beltrami} as a constrained evolution problem for a
1-form on a surface. The key equations that we derive here
are~\eqref{pdtal} and~\eqref{dal=0}, which were already discussed in
the Introduction.

Let us take a point $p$ of the domain $U$ such that the gradient of
$f$ does not vanish in a small neighborhood (which we still call $U$) of $p$. Without loss
of generality we can assume that 
\[
f(p)=1
\]
and 
\[
\Si:=f^{-1}(1)\cap U
\]
is a connected surface. By rotating the coordinate axes if
necessary, $\Si$ can be parametrized as a graph, namely
\[
\Si=\big\{(\xi,h(\xi))\big\}
\]
with the coordinates $\xi=(\xi_1,\xi_2)$ taking values in a disc. Here $h$
is a function of the same regularity as $f$ and is defined via the implicit function theorem and the relation
\[
f (\xi,h(\xi))=1\,.
\]
Moreover, we can assume that the point $p$
lies at the origin of the coordinate system and that the gradient of $f$ is parallel to the
third coordinate at that point, which means
\begin{gather*}
h(0)=0\quad \text{and} \quad  \frac{\pd h}{\pd\xi_i}(0)=0\,.
\end{gather*}

Let us consider the vector field
\[
X:=\frac{\nabla f}{|\nabla f|^2}
\]
and denote by $\phi_t$ its local flow. We can parametrize $U$
by coordinates $(t,\xi)$ defined via
\begin{equation}\label{xtxi}
x=\phi_t(\xi,h(\xi))\,.
\end{equation}
It is clear that $f=1$ when $t=0$ by the definition of the
function~$h$ and that
\[
X\cdot \nabla f=1\,,
\]
which implies that 
\begin{equation}\label{ft}
f(\phi_sx)=f(x)+s
\end{equation}
as long as the action of the local flow on $x$ is defined.
In particular, we deduce that in the new coordinates the function $f$ reads as
\begin{equation}\label{ft2}
f=1+t\,.
\end{equation}

It is important to notice that, in these coordinates, the Euclidean
metric is of the form
\begin{equation}\label{ortho}
ds^2=\chi(t,\xi)^2\, dt^2+ g_{ij}(t,\xi)\, d\xi_i \, d\xi_j\,,
\end{equation}
where the function $\chi$ stands for the function $1/|\nabla f|$
written in the new coordinates and
\[
g_{ij}:=\pd_i x\cdot \pd_j x
\]
is the induced metric of the surface of constant~$t$.
Here $x$ is given in terms of $(t,\xi)$ by~\eqref{xtxi} and $\pd_i$
henceforth stands for the derivative with respect to $\xi_i$. Since
$|X|=1/|\nabla f|$, the only nontrivial assertion here is that the
crossed terms
\[
\pd_i x\cdot \pd_t x
\]
are zero. The easiest way to see this is to prove that the inverse of
the metric tensor, which we claim to be of the form
\[ 
\left( \begin{array}{ccc}
\chi^2 & 0 & 0 \\
0 & g_{11} & g_{12} \\
0 & g_{21} & g_{22} \end{array} \right)\,,
\]
is indeed read as
\[ 
\left( \begin{array}{ccc}
\chi^{-2} & 0 & 0 \\
0 & g^{11} & g^{12} \\
0 & g^{21} & g^{22} \end{array} \right)\,.
\]
This is immediate, for it is well known that the $(t,i)$ component of
the latter
matrix is precisely
\begin{align*}
\nabla f\cdot \nabla \xi_i &=|\nabla f|^{2} X\cdot \nabla \xi_i\\
&=|\nabla f|^{2}\, \frac d{ds}(\xi_i\circ\phi_s)\\
&=0\,.
\end{align*}
Here we are considering the variables $\xi_i$ as functions of $x$ and
to pass to the last line we have used that, as a consequence
of~\eqref{xtxi} and~\eqref{ft},
\begin{align*}
\xi_i(\phi_sx)=\big(\phi_{1-f(\phi_sx)}\circ\phi_sx \big)_i=\big(\phi_{1-f(x)}x \big)_i=\xi_i(x)
\end{align*}
for all $s$, with the subscript $i$ denoting the $i^{\text{th}}$
component of the point.

Given a solution~$u$ to Eq.~\eqref{Beltrami} in $U$, let us denote by
$\be$ its dual 1-form, computed using the Euclidean metric. The first
integral condition~\eqref{firstint}, together with the block structure of
the metric in these coordinates shown in Eq.~\eqref{ortho}, then imply
that $\be$ must be of the form
\begin{equation}\label{al}
\be=\be_i(t,\xi)\, d\xi_i\,.
\end{equation}
Denoting by $|g|$ the determinant of the matrix $(g_{ij})$, a
straightforward computation then shows that the differential and Hodge
star of $\be$ are as follows:
\begin{align}
d_{\RR^3}\be&= (\pd_1\be_2-\pd_2\be_1)\, d\xi_1\wedge d\xi_2 + \pd_t\be_1\,
dt\wedge d\xi_1 + \pd_t\be_2\,
dt\wedge d\xi_2\,,\label{dal}\\
*_{\RR^3}\be & = \chi|g|^{1/2}\, \big( g^{2i}\be_i\, dt\wedge d\xi_1
-g^{1i}\be_i\, dt\wedge d\xi_2\big)\,.\label{*al}
\end{align}
Here we are using the cumbersome notation $d_{\RR^3}\be$ and $*_{\RR^3}$
to stress that these operations are computed with respect to all
three variables $(t,\xi)$ and thus avoid confusion with the
two-dimensional exterior derivative and Hodge operator that we will introduce shortly.

When expressed in terms of the dual 1-form, the Beltrami
equation~\eqref{Beltrami} takes the form
\[
d_{\RR^3}\be=f *_{\RR^3}\!\be\,.
\]
Reading off the coefficients from~\eqref{dal}-\eqref{*al} and using
the equation~\eqref{ft2}, the Beltrami equation in the coordinates~$(t,\xi)$
amounts to the following system:
\begin{subequations}
\begin{align}
\pd_t\be_1&=(1+t)\chi|g|^{1/2}\,  g^{2i}\be_i\,,\label{dim2a}\\
\pd_t\be_2&=-(1+t)\chi|g|^{1/2}\,  g^{1i}\be_i\,,\label{dim2b}\\
0&=\pd_1\be_2-\pd_2\be_1\,.\label{constraint2}
\end{align}
\end{subequations}
To analyze this system, we begin by making use of Eq.~\eqref{al} to
consider $\be$ as a time-dependent 1-form on the surface $\Si$, which
maps each ``time'' $t$ to a 1-form in two dimensions
$\be(t)$. Eqs.~\eqref{dim2a}-\eqref{dim2b} show that the evolution in
time of this 1-form is defined by a time-dependent tensor field $T(t)$
on $\Si$ as
\begin{equation}\label{pdtal}
\pd_t\be=T(t)\, \be\,.
\end{equation}
In fact, $T(t)$ can be written in terms of the Hodge operator $*_t$
associated with the time-dependent metric 
\[
g_{ij}(t,\xi)\, d\xi_i\, d\xi_j
\]
on $\Si$ as
\begin{equation}\label{T}
T(t)\be=-(1+t)\chi(t,\xi)\, *_t\!\be
\end{equation}
and its components are
\begin{equation}\label{Tcomp}
(T^j_i)=(1+t)\chi|g|^{1/2}\left( \begin{array}{cc}
g^{12} & g^{22} \\
-g^{11} & -g^{12} \end{array} \right)
\end{equation}

On the contrary, Eq.~\eqref{constraint2} does not describe an evolution,
but impose the stationary constraint that $\be(t)$ must be closed (as
a 1-form on $\Si$) for all times. Denoting by $d$ the exterior
differential on the surface, this reads as
\begin{equation}\label{dal=0}
d\be=0\,.
\end{equation}

\section{Proof of the theorems}
\label{S.proofs}

To derive a useful necessary condition for $\be$ to be a solution of
the system~\eqref{pdtal}-~\eqref{dal=0}, which is equivalent to the
Beltrami equation~\eqref{Beltrami}, let us begin by defining the
family of time-dependent tensor fields $T_n(t)$ recursively as
\begin{align*}
T_1&:=T\,,\\
T_{n+1}&:= \pd_t T_n+T_nT\,.
\end{align*}
It is not hard to see that $T_n(t)$ depends on $n$ derivatives of
$f$ in a non-local manner, the non-locality being due to the definition
of the coordinate system, and that $T_n(0)$ is a (local, nonlinear)
function of the first $n$ derivatives of $f$. These tensor fields can be used to describe the constraints of the
system due to the following

\begin{proposition}\label{P.constraint}
If the function $f$ is of class $C^{k,\al}$ and $n\leq k-1$, the time-dependent $1$-form
$\be$ must satisfy the constraint
\[
d(T_n \be)=0
\]
at all times.
\end{proposition}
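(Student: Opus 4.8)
The plan is to prove this by induction on $n$, the idea being to differentiate the constraint $d\be=0$ repeatedly in the transverse variable $t$ and to observe, via the evolution equation~\eqref{pdtal}, that the successive time derivatives of $\be$ are organized precisely by the recursively defined tensors $T_n$. The one structural fact I would invoke throughout is that the exterior differential $d$ acts only on the surface coordinates $\xi$, whereas $\pd_t$ differentiates in the complementary direction, so on any sufficiently smooth time-dependent form the two operators commute, $\pd_t(d\om)=d(\pd_t\om)$ --- this is just equality of mixed partials.

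For the base case $n=1$ I would start from the constraint~\eqref{dal=0}, which holds at \emph{every} time, differentiate it in $t$, move $\pd_t$ past $d$, and substitute $\pd_t\be=T\be=T_1\be$; this gives $d(T_1\be)=0$ at once. For the inductive step, assuming $d(T_n\be)=0$ at all times, and that $n$ is small enough relative to $k$ for the following steps to be valid, I would differentiate this identity in $t$, commute $\pd_t$ and $d$ once more, and apply the Leibniz rule together with $\pd_t\be=T\be$:
\[
0=\pd_t\,d(T_n\be)=d\big((\pd_tT_n)\be+T_n\,\pd_t\be\big)=d\big((\pd_tT_n+T_nT)\,\be\big)=d(T_{n+1}\be),
\]
the last equality being exactly the recursion $T_{n+1}=\pd_tT_n+T_nT$. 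This closes the induction. The same computation also shows $\pd_t^{\,n}\be=T_n\be$, so that one may alternatively write $d(T_n\be)=d(\pd_t^{\,n}\be)=\pd_t^{\,n}(d\be)=0$; I would record this identity as well, since it exhibits the meaning of the tensors $T_n$.

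The step I expect to be the actual obstacle is not this algebra but the regularity bookkeeping needed to justify it in the stated range $n\leq k-1$: the argument differentiates $\be$ up to $n$ times in $t$ and then once more in $\xi$, and it differentiates the tensor $T$ up to $n-1$ times in $t$, so one must be sure every such object is defined and continuous when $f\in C^{k,\al}$. The ingredients for this are already available: $h$ is of class $C^{k,\al}$ by the implicit function theorem; the flow $\phi_t$ of $X=\nabla f/|\nabla f|^2$ inherits the regularity of $X$, so the coordinate map~\eqref{xtxi} --- and hence $\chi=1/|\nabla f|$ and $g_{ij}=\pd_ix\cdot\pd_jx$ in the new coordinates, from which $T$ and all the $T_n$ are built --- is as regular as $k-1$ derivatives of $f$ permit; and $u$, hence $\be$, is of class $C^{k+1,\al}$ by the Schauder estimate for $\De u+\nabla f\times u+f^2u=0$ recorded in the Introduction. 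Carefully counting how many derivatives each step consumes, and using that $\be$ is two degrees smoother than the ambient coefficients, one finds that $n\leq k-1$ is exactly the range in which the induction above is legitimate; pinning this count down is the only delicate point of the proof.
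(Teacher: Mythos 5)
Your proposal is correct and takes essentially the same route as the paper: the paper also differentiates the constraint $d\be=0$ in time, commutes $\pd_t$ with the surface differential $d$, and runs the same easy induction on the evolution equation, phrased as the identity $\pd_t^n\be=T_n\be$ --- precisely the alternative formulation you record at the end. The regularity bookkeeping you flag as the delicate point is not elaborated in the paper either, and your account of it is consistent with the stated range $n\leq k-1$.
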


\begin{proof}
  As the constraint equation~\eqref{dal=0} holds for all times, it
  trivially implies that the time derivatives of the 1-form $\be$ must
  satisfy the constraint
\[
d (\pd_t^n\be)=0\,,
\]
for any $n$. An easy induction argument using the evolution
equation~\eqref{pdtal} shows that
\[
\pd_t^n\be=T_n\be\,,
\]
so the proposition follows.
\end{proof}

By exploiting the previous constraints with $n=0,1$, we are now ready
to prove that there are no nontrivial Beltrami fields whenever $f$ has
a level set diffeomorphic to a sphere:

\begin{proof}[Proof of Theorem~\ref{T.spheres}]
Since we are assuming that the surface $\Si=f^{-1}(1)$ is a sphere, Eq.~\eqref{ft2} and the fact that the gradient of $f$
does not vanish on $\Si$ imply that $\Si_{t_0}:=f^{-1}(1+t_0)$ is also diffeomorphic to
$\SS^2$ for small enough $t_0$. Hence the constraint
equation~\eqref{dal=0} implies that there is a scalar function $\psi(t,\xi)$
such that
\[
\be=d\psi\,.
\]
By the construction of $\be$ and the regularity of $f$, the 1-form $\be$ is of
class $C^{1,\al}$, so $\psi(t,\xi)$ is a $C^{2,\al}$ function of
$\xi$. 

Taking into account the form of  the tensor field $T$ (cf.\
Eq.~\eqref{T}), we then find that
\begin{align*}
d(T\be)&=-(1+t)d(\chi\, *_t\! d\psi)\\
&=(1+t)\chi\,\Big[-d\! *_t\!d\psi-d(\log \chi)\wedge *_td\psi\Big]\\
&=-(1+t)\chi\,\Big[\De_t\psi+\langle \nabla_t(\log \chi),\nabla_t\psi\rangle_t\Big]\, \mathrm{Vol}_t\,,
\end{align*}
where the subscripts denote that the Laplacian, gradient and volume
form are computed on the sphere $\Si_t$ using the induced metric,
which has components $g_{ij}(t,\xi)$. Since $\chi$ is nonzero
for small enough $t$, Proposition~\ref{P.constraint} then ensures that
the equation
\[
\De_t\psi+\langle \nabla_t(\log \chi),\nabla_t\psi\rangle_t=0
\]
holds. As this equation satisfies the maximum principle in the closed
surface $\Si_t$, it follows that $\psi(t,\xi)$ is a constant that
depends only on $t$. Thus $\be\equiv0$ in a neighborhood of
$\Si$, and therefore everywhere by unique continuation.
\end{proof}

The proof of Theorem~\ref{T.main} also makes crucial use of
Proposition~\ref{P.constraint} to show that the function $f$ must
satisfy some differential constraint:

\begin{proof}[Proof of Theorem~\ref{T.main}]
Let us begin by recording the following formula for $d(T_n\be)$
in local coordinates:
\begin{multline}\label{dTnal}
d (T_n\be)=\Big[ \big(\pd_1 (T_n)^i_2-\pd_2(T_n)^i_1\big) \,\be_i+ \big(
(T_n)^2_2-(T_n)^1_1\big) \, \pd_2\be_1\\
+
(T_n)^1_2\,\pd_1\be_1- (T_n)^2_1\pd_2\be_2\Big]\, d\xi_1\wedge d\xi_2\,.
\end{multline}
Here we have used that $\pd_1\be_2=\pd_2\be_1$ by the constraint
equation~\eqref{constraint2}.

By Eq.~\eqref{Tcomp} and the fact that
$g_{ij}$ is  a metric,
\[
T^2_1=(1+t)\chi|g|^{1/2} g^{22}
\]
is strictly positive. Since $d(T\be)=0$ by Proposition~\ref{P.constraint}, we
can therefore isolate $\pd_2\be_2$ in this equation, finding that
\begin{equation}\label{pd2be2}
\pd_2\be_2=\frac1{T^2_1}\Big[ \big(\pd_1 T^i_2-\pd_2T^i_1\big) \,\be_i
+
T^1_2\,\pd_1\be_1+ (T^2_2-T^1_1)\,\pd_2\be_1\Big]\,.
\end{equation}

To simplify the notation, let us consider the $4$-component vectors
\[
\Ga:=(\be_1,\be_2,\pd_1\be_1,\pd_2\be_1)
\]
and
\begin{equation}\label{cTn}
\cT_n :=\left( \begin{array}{c}
\pd_1 (T_n)^1_2-\pd_2(T_n)^1_1
-\frac{(T_n)^2_1}{T^2_1}\big(\pd_1 T^1_2-\pd_2T^1_1\big) \\[1mm]
\pd_1 (T_n)^2_2-\pd_2(T_n)^2_1
-\frac{(T_n)^2_1}{T^2_1}\big(\pd_1 T^2_2-\pd_2T^2_1\big)\\[1mm]
(T_n)^1_2
-\frac{(T_n)^2_1 T^1_2}{T^2_1}\\[1mm]
(T_n)^2_2-(T_n)^1_1-\frac{(T_n)^2_1}{T^2_1}\big(T^2_2-T^1_1\big)
  \end{array}\right)
\end{equation}
Using Eqs.~\eqref{constraint2} and~\eqref{pd2be2} in~\eqref{dTnal}, one can then write
\[
d(T_n\be)=(\Ga\cdot \cT_n)\, d\xi_1\wedge d\xi_2\,,
\]
where the dot has the obvious meaning. Hence the contraint
$d(T_n\be)=0$ granted by Proposition~\ref{P.constraint} takes the form
\begin{equation}\label{cTnGa}
\cT_n\cdot\Ga=0\,,
\end{equation}
the condition being a priori nontrivial for all $n\geq2$.  In
particular, if the Beltrami equation has a nonzero solution, the
matrix $(\cT_2,\cT_3,\cT_4,\cT_5)$ cannot be of maximal rank, that is,
\begin{equation}\label{det}
\det(\cT_2,\cT_3,\cT_4,\cT_5)=0\,.
\end{equation}
Due to the definition of the tensor fields $T_n$ and~$\cT_n$, this equation involves derivatives of $f$ of order at
most~6.

Eq.~\eqref{det} is almost the differential constraint $P[f]=0$ whose
existence was claimed in the statement of the theorem. The only subtle
point is that, as we discussed when we defined the tensor fields
$T_n$, for $t\neq0$ Eq.~\eqref{det} is not a local function of~$f$ because we have used the implicit function theorem and the
flow of the vector field $X$ to construct the local coordinate
system~$(t,\xi)$. However the differential
constraint can be defined on the initial surface $\Si$ as
\begin{equation}\label{detPf}
P[f]|_\Si:=\det(\cT_2,\cT_3,\cT_4,\cT_5)\big|_{t=0}\,,
\end{equation}
and this is indeed a local function of $f$ and its derivatives up to
sixth order.  Since the initial surface $\Si$ is arbitrary and the
dependence on the surface is smooth, by carrying out the same
construction with $\Si$ replaced by $\Si_{t_0}$, for all $t_0$ in an
interval around $0$, this defines $P[f]$ in a neighborhood of $\Si$,
thereby completing the proof of the theorem.
\end{proof}

To show that the nonlinear differential operator $P$ is nontrivial,
we will approximately compute its action on a couple of concrete
functions $f$. As a byproduct, we will provide a few simple, explicit examples of
functions for which there are no nontrivial solutions to the Beltrami
equation~\eqref{Beltrami} and illustrate how one can evaluate~$P[f]$ in practice. More sophisticated examples can be obviously
obtained using the same procedure. Of course, one can easily check
that $P[\la]\equiv 0$ for any constant $\la$.

\begin{proposition}\label{P.example1}
Suppose that the vector field $u$ satisfies the Beltrami equation~\eqref{Beltrami} with
\[
f(x):=1+a x_1+bx_1^3+x_3
\]
in a neighborhood of the origin. Then $u\equiv0$ if $b\neq0$.
\end{proposition}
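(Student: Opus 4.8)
The plan is to apply the machinery of Theorem~\ref{T.main} directly to the specific function $f(x)=1+ax_1+bx_1^3+x_3$, evaluating the obstruction $P[f]|_\Si$ on the level surface $\Si=f^{-1}(1)$. First I would set up the adapted coordinate system near the origin: since $\nabla f(0)=(a,0,1)$ is nonzero (after the harmless rotation making it parallel to the third axis), the surface $\Si$ is a graph $x_3=h(\xi_1,\xi_2)$ determined implicitly by $f(\xi,h(\xi))=1$, and on $\Si$ one has $h$ depending only on $\xi_1$ through $ax_1+bx_1^3+h=0$. The key simplification is that I only need the jet of $f$ (and hence of $h$, $\chi=1/|\nabla f|$, and $g_{ij}$) at the origin up to sixth order, since by~\eqref{detPf} the quantity $P[f]|_\Si$ is a local function of those derivatives. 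So the computation reduces to a finite, if tedious, Taylor expansion.

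Next I would compute the building blocks at $t=0$: the induced metric $g_{ij}$ on $\Si$, its inverse, $|g|^{1/2}$, the function $\chi$, and then the tensor field $T(0)$ from~\eqref{Tcomp} together with its $\xi$-derivatives. From there the recursively defined $T_n(0)=\pd_t^{\,n-1}T(0)$ for $n=1,\dots,5$ are obtained; here the nonlocal $t$-derivatives must be unwound using~\eqref{ft2} and the flow of $X=\nabla f/|\nabla f|^2$, but since we are only evaluating at $t=0$ this amounts to differentiating the coordinate-expression relations a bounded number of times. Plugging these into the formula~\eqref{cTn} for $\cT_n$ and then into the $4\times4$ determinant~\eqref{det} yields $P[f]|_\Si$ as an explicit (messy) function of $a$, $b$, and the coordinates. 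The claim to establish is that this determinant is \emph{not} identically zero on $\Si$ when $b\neq0$; by Theorem~\ref{T.main}, that forces $u\equiv0$.

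The cleanest way to show non-vanishing of the determinant is to exploit the structure: when $b=0$, $f$ is an affine function of $x$, which (after a Euclidean change of variables) is a constant plus a linear coordinate, and the remark at the end of Section~\ref{S.proofs} that $P[\text{affine}]$ behaves like $P[\la]\equiv0$ suggests $P[f]\equiv0$ in that case — so $P[f]$ should be, to leading order, proportional to $b$ (times a nonzero coefficient built from $a$ and the coordinates). I would therefore expand the determinant to first order in $b$ about the affine case: the zeroth-order term vanishes, and I would identify the coefficient of $b$ at a convenient point (say the origin, or a generic point of $\Si$) and check it is nonzero. A convenient shortcut is to evaluate everything at $x_1=0$ on $\Si$, where many terms simplify, and track only the lowest-order contribution in $b$.

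The main obstacle is the sheer size of the symbolic computation: assembling $T_1,\dots,T_5$ requires up to fourth $t$-derivatives of a $2\times2$ matrix whose entries are themselves nonlinear in the six-jet of $f$, and the $4\times4$ determinant of the $\cT_n$ has many terms. I expect this is best done with computer algebra, and the proof in the paper likely records only the outcome (the leading $b$-coefficient) rather than the full expression. The one genuine mathematical point to be careful about — as opposed to bookkeeping — is confirming that the $b^0$ part really does vanish identically (so that the obstruction is detecting precisely the cubic term), since otherwise the non-triviality argument would have to be run differently; this follows from the observation that an affine $f$ is linearly equivalent to the trivially solvable constant case and $P$ is a differential operator, so it cannot distinguish $f$ from a constant when all second and higher derivatives vanish.
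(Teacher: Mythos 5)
Your overall strategy is the same as the paper's: invoke Theorem~\ref{T.main}, compute the six-jet data on $\Si=f^{-1}(1)$ with $h(\xi)=-a\xi_1-b\xi_1^3$, assemble $\cT_2,\dots,\cT_5$ and evaluate the determinant~\eqref{detPf}, then show it is not identically zero when $b\neq0$. The gap is in the step where you certify non-vanishing. Your structural expectation --- that the $b^0$ part vanishes and $P[f]|_\Si$ is, to leading order, linear in $b$ with a nonzero coefficient you can read off at one convenient point such as $\xi_1=0$ --- is contradicted by the actual computation. In the paper's expansion $P[f]|_\Si=\sum_{j=0}^4 c_j\xi_1^j+O(\xi_1^5)$, the coefficients $c_0,\dots,c_3$ all carry a factor of $a$ and the lowest power of $b$ that appears anywhere is $b^4$ (e.g.\ $c_0\propto a^2b^4$); the term independent of $a$ only shows up in $c_4=46656\,b^6+ab\,G$. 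Consequently the first-order-in-$b$ expansion about the affine case is identically zero (to these orders), and evaluation at the single point $\xi_1=0$ yields $c_0$, which vanishes for $a=0$ (and also on the curve $15a^4+14a^2+36ab-1=0$) even when $b\neq0$. So your shortcut would either return $0$ or fail to rule out nonzero $b$. The paper instead has to use the full finite Taylor system: it shows that $c_0=c_1=c_2=c_3=0$ forces $ab=0$, and then that $c_4=0$ excludes $a=0$ with $b\neq 0$, so that the simultaneous vanishing of $c_0,\dots,c_4$ forces $b=0$. Any correct write-up must carry out (or cite) a computation of comparable depth; there is no shortcut through a one-point, first-order-in-$b$ evaluation.

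A smaller point: your justification that the affine case gives $P[f]\equiv0$ (``an affine $f$ is linearly equivalent to the trivially solvable constant case, and $P$ cannot distinguish $f$ from a constant when all second and higher derivatives vanish'') is not a valid argument --- a sixth-order nonlinear operator can perfectly well depend on first derivatives, and a nonconstant affine $f$ is not equivalent to a constant one. The correct way to see it, as in the paper's remark, is that for $b=0$ the Beltrami equation~\eqref{Beltrami} admits an explicit nontrivial solution in all of $\RR^3$, so Theorem~\ref{T.main} forces $P[f]\equiv0$ for affine $f$. This observation is a useful consistency check, but, as explained above, it does not reduce the non-triviality argument to a first-order perturbation in $b$.
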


\begin{proof}
Using the same notation as in the proof of Theorem~\ref{T.main},
we can parametrize $\Si:=f^{-1}(1)$ as the graph
\[
\Si=\big\{(\xi,h(\xi)\big\}\,,
\]
with
\[
h(\xi):=-a\xi_1-b\xi_1^3\,.
\]
After a lengthy but straightforward computation starting from Eq.~\eqref{xtxi}, one can compute the remaining objects that appear in the
proof of Theorem~\ref{T.main} as a power series in the coordinate
$t$. In particular, one finds that the determinant~\eqref{detPf} takes
the form
\[
P[f]|_\Si =\sum_{j=0}^4 c_j\xi_1^j+O(\xi_1^5)\,,
\]
where the coefficients $c_j$ depend on $a$ and $b$ as:
\begin{align*}
c_0&:=-\frac{5184 a^2 b^4 \left(15 a^4+14 a^2+36 a
   b-1\right)}{\left(a^2+1\right)^{14}}\,,\\
c_1&:=-\frac{20736 a^2 b^4 \left(8 a^3-63 a^2 b+8 a-9
   b\right)}{\left(a^2+1\right)^{14}}\,,\\
c_2&:=\frac{31104 a b^5 \left(169 a^6+97 a^4+468 a^3 b-73
   a^2-36 a b-1\right)}{\left(a^2+1\right)^{15}}\,,\\
c_3&:=\frac{124416 a^2 b^5 \left(84 a^4-771 a^3 b+68 a^2-15
   a b-16\right)}{\left(a^2+1\right)^{15}}\,,\\
c_4&:=46656 b^6+ ab\, G \,.
\end{align*}
Here $G$ is a complicated smooth rational function of $a$ and $b$
that can be computed explicitly.

The point now is that the only solution to the system of algebraic equations
\[
c_j=0\qquad \text{for }0\leq j\leq 4
\]
is $b=0$. In order to see this, a simple
computation shows that imposing $c_j=0$ for $0\leq j\leq 3$ implies
that $ab=0$, while $c_4=0$ for $b=0$ but not for $a=0$. The
proposition then follows from
Theorem~\ref{T.main}.
\end{proof}

\begin{remark}
For $b=0$, the function $f$ is affine and the Beltrami
equation~\eqref{Beltrami} does admit a nontrivial
solution, which is in fact defined in the whole space. Specifically,
if $u_0$ is any vector in $\RR^3$ orthogonal to $e:=(a,0,1)$,
\[
u:=u_0\, \cos\frac{(1+ax_1+x_3)^2}{2|e|}+ \frac{u_0\times e}{|e|}\, \sin\frac{(1+ax_1+x_3)^2}{2|e|}
\]
is such a solution. Hence the set of obstructions on $f$ that we
get from the operator $P$ is optimal for this family of functions.
\end{remark}

\begin{proposition}\label{P.example2}
Suppose that the vector field $u$ satisfies the Beltrami equation~\eqref{Beltrami} with
\[
f(x):=1+x_1^2+ax_2^2+x_3
\]
in a neighborhood of the origin. Then $u\equiv0$ if $a\neq1$.
\end{proposition}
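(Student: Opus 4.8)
The plan is to apply Theorem~\ref{T.main} and show that the operator $P$ does not vanish identically for this one-parameter family, which by the theorem forces $u\equiv0$ for all but at most finitely many values of $a$; the content of the proposition is that the exceptional set is exactly $\{a=1\}$. So the real task is to compute $P[f]|_\Si$ as explicitly as needed and check that it has a zero (as a function of $\xi$) precisely when $a=1$.

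First I would set up the coordinates exactly as in the proof of Theorem~\ref{T.main}: parametrize $\Si=f^{-1}(1)$ as the graph $\{(\xi,h(\xi))\}$ with $h(\xi)=-\xi_1^2-a\xi_2^2$, and note that the origin is the base point, where $\nabla f$ is parallel to $e_3$ as required. Then I would compute, as a power series in $t$ (and to the needed order in $\xi$), the flow $\phi_t$ of $X=\nabla f/|\nabla f|^2$, the map $x=\phi_t(\xi,h(\xi))$, the induced metric $g_{ij}(t,\xi)$, its inverse and determinant, and the function $\chi=1/|\nabla f|$ in these coordinates. From these one assembles $T=T(t)$ via~\eqref{Tcomp}, then the iterated tensors $T_2,\dots,T_5$ by the recursion $T_{n+1}=\pd_tT_n+T_nT$, then the four-vectors $\cT_2,\dots,\cT_5$ from~\eqref{cTn}, and finally the determinant~\eqref{detPf}, $P[f]|_\Si=\det(\cT_2,\cT_3,\cT_4,\cT_5)\big|_{t=0}$. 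Because $f$ is a quadratic polynomial plus $x_3$, the coordinate change is algebraic and everything is computable in closed form; in practice one expands in $\xi$ about the origin and keeps enough low-order terms to exhibit a nonzero coefficient whose vanishing pins down $a$.

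Concretely I expect $P[f]|_\Si$ to be a rational function of $a$ times a polynomial in $\xi$, of the shape $P[f]|_\Si=\sum_{j,k} c_{jk}\,\xi_1^j\xi_2^k+(\text{higher order})$, where each $c_{jk}$ is a rational function of $a$ with denominator a power of $1+a$ (or similar). One then checks that the system $\{c_{jk}=0\}$ over the relevant low-order indices has $a=1$ as its only real solution — for instance by identifying one coefficient, say the constant term or a pure $\xi_1^2$ coefficient, that is a nonzero multiple of $(a-1)$ times something that does not vanish, analogously to how $c_4=46656b^6+abG$ in Proposition~\ref{P.example1} kills the spurious branch. Once some $c_{jk}\neq0$ for $a\neq1$, Theorem~\ref{T.main} gives $u\equiv0$, and since $P$ cannot then be identically zero the operator is confirmed nontrivial on this family.

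The main obstacle is purely computational: carrying the $t$-expansions of $\phi_t$, $g_{ij}$ and $\chi$ to high enough order that $T_5$ — hence $\cT_5$ — is known accurately, since $\cT_5$ involves five $t$-derivatives and four $\xi$-derivatives, i.e.\ data of $f$ up to sixth order, and the expressions grow rapidly. A secondary subtlety, also present in Proposition~\ref{P.example1}, is that one must retain enough orders in $\xi$ to rule out \emph{all} spurious solution branches of $\{c_{jk}=0\}$, not merely the leading one; controlling this with a computer algebra system and then reporting only the few coefficients needed for the argument is the efficient route. The case $a=1$ is genuinely exceptional: there $f=1+x_1^2+x_2^2+x_3$ is radially symmetric in $(x_1,x_2)$, so one does not expect $P[f]$ to obstruct — consistent with the statement, which only claims $u\equiv0$ for $a\neq1$.
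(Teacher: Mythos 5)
Your proposal follows essentially the same route as the paper: parametrize $\Si=f^{-1}(1)$ as the graph $h(\xi)=-\xi_1^2-a\xi_2^2$, expand the determinant $P[f]|_\Si$ of~\eqref{detPf} in powers of $\xi$, and invoke Theorem~\ref{T.main} once some low-order coefficient is shown to be nonzero for $a\neq1$. The paper simply carries out the computation you outline, finding $P[f]|_\Si=1024(a-1)^2\bigl[\text{an explicit quadratic form in }\xi\bigr]+O(|\xi|^3)$ whose quadratic part vanishes precisely when $a=1$, which is the verification your plan defers to a computer algebra step.
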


\begin{proof}
As before,
we parametrize $\Si:=f^{-1}(1)$ as the graph
\[
\Si=\big\{(\xi,h(\xi)\big\}\,,
\]
with
\[
h(\xi):=-\xi_1^2-a\xi_2^2\,.
\]
Arguing as in Proposition~\ref{P.example1} one finds that the
determinant~\eqref{detPf} is of the form
\begin{multline*}
P[f]|_\Si =1024 (a-1)^2\Big[ (33 + 128 a + 312 a^2 + 224 a^3 + 768 a^4 - 256 a^5)\xi_1^2 \\
-16 a^2 (3 + 11 a + 66 a^2 - 88 a^3 + 8 a^4)\xi_1\xi_2 \\
+ a^4(-39 - 24 a + 760 a^2 + 640 a^3 - 128
a^4)\xi_2^2\Big]+O(|\xi|^3)\,.
\end{multline*}
It can be easily checked that the quadratic part of this function vanishes if
and only if $a=1$, so the proposition follows from
Theorem~\ref{T.main}.
\end{proof}


\section{Final remarks}
\label{S.remarks}

Let us conclude with a few comments regarding the existence of
Beltrami flows, in view of the results we have established in this paper.

\subsection{Compressible Euler flows}

In Ref.~\cite{Yudovich}, considerable attention is paid to the bearing
of compressible Beltrami fields on the helical flow paradox. Using
Theorem~\ref{T.main} and the results that we proved in~\cite{Annals}
we can now show that compressible Beltrami fields have totally
different existence properties than the incompressible ones, since
whenever the function $f$ does not change sign one has many associated solutions.

More precisely, we have the following theorem. We recall that a
compressible Beltrami field is not a solution of the Euler equation
unless the barotropic condition is satisfied, and that it is natural
to assume that $f$ is positive because it plays the role of the fluid
density.

\begin{theorem}\label{T.compressible}
Let $f$ be a positive real-analytic function in $\RR^3$. Then there
are nontrivial solutions to the equation
\begin{equation}\label{incompr}
\curl u=f u
\end{equation}
defined in the whole space $\RR^3$.
\end{theorem}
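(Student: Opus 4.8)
The plan is to drop the divergence-free condition and exploit the extra freedom this grants. Without the constraint $\Div u=0$, the equation $\curl u=fu$ no longer forces $f$ to be a first integral of $u$, so the whole obstruction mechanism built around the constrained evolution \eqref{pdtal}--\eqref{dal=0} disappears. The key observation is that $\curl u = fu$ with $f>0$ is, up to the lower-order zeroth-order term, a first-order elliptic-type system of the same character as the strong Beltrami equation $\curl u=\lambda u$, and one can try to reduce it to the latter. First I would look for a scalar multiplier: writing $u=\phi v$ for a positive function $\phi$ to be chosen, one computes $\curl u=\phi\,\curl v+\nabla\phi\times v$, so the equation becomes $\phi\,\curl v+\nabla\phi\times v=f\phi v$. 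This does not obviously decouple, so instead I would take the more robust route of invoking the global realization results of \cite{Annals}: there we constructed strong Beltrami fields in $\RR^3$ with prescribed local behavior on compact sets, and more importantly the techniques there produce solutions to $\curl w=\lambda w$ approximating, in a suitable Runge-type sense, any divergence-free field locally satisfying the equation. The idea is to use such a $w$ as a building block and correct it.

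The cleaner approach, which I would carry out in detail, is the following. Fix $\lambda=1$. By the results of \cite{Annals} there exist (many) global strong Beltrami fields $w$ on $\RR^3$ with $\curl w=w$. Now seek $u$ of the form $u=e^{S}\,w$ is too naive; instead consider the substitution at the level of the vector potential. Since $\Div w=0$ automatically, write $w=\curl A$ with $A$ a global real-analytic $1$-form (identified with a vector field); then $\curl w = w$ reads $\curl\curl A=\curl A$. For the compressible problem, set $u:=\curl A + (f-1)\,\text{(something)}$... — rather than pursue this, the honest and correct strategy is a perturbative/continuity one: consider the one-parameter family $f_\tau := 1+\tau(f-1)$ for $\tau\in[0,1]$, note $f_0\equiv1$ admits the global solution $w$, and show by an implicit-function-theorem argument in appropriately weighted real-analytic or Sobolev spaces that the set of $\tau$ for which \eqref{incompr} has a nontrivial global solution is open and closed. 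Openness follows because the linearized operator $u\mapsto\curl u-f_\tau u$ is, modulo the zeroth-order term $(f_\tau-1)u$ that is a compact (indeed lower-order) perturbation, the operator $u\mapsto\curl u - u$, whose relevant mapping properties on $\RR^3$ were analyzed in \cite{Annals}; the positivity $f>0$ guarantees that no spurious kernel appears, since if $\curl u=f_\tau u$ then $u$ satisfies the elliptic equation $\Delta u+\nabla f_\tau\times u+f_\tau^2 u=0$ and hence has the unique continuation property, ruling out solutions with compact support.

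The main obstacle, and the step requiring the most care, is the global analysis: controlling the behavior at infinity so that the deformation stays within a function space where $\curl -\,\mathrm{Id}$ is well-behaved, and ensuring the solution obtained at $\tau=1$ is genuinely nontrivial rather than collapsing to zero along the path. I expect this to be handled exactly as in \cite{Annals}: one works with solutions that are superpositions of Beltrami beams / Herglotz-type waves, for which one has explicit decay estimates and a Runge-type global approximation theorem, and the real-analyticity of $f$ is used to run a Cauchy–Kovalevskaya-style local construction near each point which is then globalized. The positivity of $f$ enters precisely to keep the zeroth-order coefficient away from the degenerate value $0$, where the system would change type. Once the global solution operator framework of \cite{Annals} is in place, the addition of the variable, sign-definite factor $f$ is a lower-order modification and the conclusion follows. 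In contrast to the incompressible case treated in Theorems~\ref{T.main} and~\ref{T.spheres}, there is no hidden evolution constraint, so no compatibility condition on $f$ arises and the solutions exist for every positive real-analytic $f$.
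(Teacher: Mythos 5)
Your proposal does not reach a proof, and the route you commit to (the continuity method in $\tau$) has genuine gaps that are not merely technical. Openness of the set of $\tau$ for which \eqref{incompr} has a nontrivial global solution would require invertibility or at least Fredholm-type mapping properties of $u\mapsto\curl u-f_\tau u$ on some function space over all of $\RR^3$, and none of this is established: on an unbounded domain the zeroth-order term $(f_\tau-1)u$ is \emph{not} a compact perturbation, the unique continuation remark only excludes compactly supported kernel elements and says nothing about surjectivity, and the Runge-type approximation and decay estimates of~\cite{Annals} are proved for the constant-factor equation $\curl w=\la w$, so they do not transfer to variable $f_\tau$ without redoing the analysis. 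Closedness is not addressed at all, and you yourself flag, without resolving it, the possibility that the solution collapses to zero as $\tau\to1$; a continuity scheme that only tracks existence of \emph{some} nontrivial solution has no mechanism to prevent this. Finally, a Cauchy--Kovalevskaya local construction cannot be globalized to $\RR^3$ just by analyticity, so the last paragraph is a hope rather than an argument.

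The idea you abandoned in the first paragraph was actually the right one, but it needs a geometric twist rather than a pure scalar multiplier in the Euclidean metric: change the metric conformally. The paper's proof takes $\tilde g:=f^2g_0$ (an analytic, possibly incomplete Riemannian metric on $\RR^3$, which is where positivity and real-analyticity of $f$ enter), invokes \cite[Example~8.2]{Annals} to get a nontrivial $v$ with $\curl_{\tilde g}v=v$ globally, and then checks by a direct computation that $u:=f^2v$ satisfies $\curl u=fu$ for the Euclidean curl. In other words, the variable factor is absorbed into a conformal rescaling of the metric plus the multiplier $f^2$, reducing the whole theorem to the already-known existence of curl eigenfields for arbitrary analytic metrics; no deformation, functional-analytic framework, or decay analysis is needed. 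If you want to salvage your write-up, replace the continuity method by this two-line reduction; the computation $\curl(f^2v)=f\cdot f^2\,\curl_{\tilde g}v$ for $\tilde g=f^2g_0$ is the only thing to verify.
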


\begin{proof}
We proved in~\cite[Example~8.2]{Annals} that if $\tilde g$ is an
analytic (possibly incomplete) Riemannian metric in $\RR^3$, there is
a vector field $v$, not identically zero, which satisfies the
equation
\[
\curl_{\tilde g}v=v
\]
in $\RR^3$. Here $\curl_{\tilde g}$ denotes the curl operator
associated with the metric $\tilde g$. 

Let us now choose $\tilde g$ as the conformally flat metric
\[
\tilde g:=f^2 g_0\,,
\]
where $g_0$ denotes the Euclidean metric.  If we set $u:=f^2 v$, a
straightforward computation shows that the Euclidean curl of $u$ is
given by
\[
\curl u= fu\,,
\]
thus completing the proof of the theorem.
\end{proof}

\begin{remark}
In particular, a straightforward consequence
of~\cite[Example~8.2]{Annals} and of the proof of the theorem is the
following: if $f$ is a positive analytic function and $L$ is any locally finite link in $\RR^3$, one can
transform it using a smooth diffeomorphism $\Phi$ of $\RR^3$ so that
$\Phi(L)$ is a set of vortex lines of the vector field $u$, which
satisfies Eq.~\eqref{incompr} in $\RR^3$. Furthermore, $\Phi$ can be
chosen close to the identity in any $C^k$ norm. Hence, there is much
freedom in the choice of the nontrivial solution~$u$.
\end{remark}

\subsection{Strong Beltrami fields}

When the function $f$ equals some nonzero constant $\la$, it is well
known that the Beltrami equation~\eqref{Beltrami} has an infinite
number of solutions. In particular, if $c_{lm}$ is a set of constant
vectors in $\RR^3$ for which the sum
\[
u:=\curl (\curl + \la)\sum_{l=0}^\infty\sum_{m=-l}^l c_{lm}\, j_l(\la r)\, Y_{lm}(\te,\vp)
\]
converges in a suitable sense, $u$ is a Beltrami field with constant
$\la$.  Here we are using spherical coordinates, $j_l$ is the
spherical Bessel function and $Y_{lm}$ are the spherical harmonics.

However, the results we have proved in this paper also have an
implication about strong Beltrami fields. In fact, it can be readily
checked that the proofs of the main results remain valid under the
assumption that $u$ is a strong Beltrami field and $f$~is a first
integral of $u$. Hence we get for free the following

\begin{theorem}\label{T.strong}
Assume that $u$ is a strong Beltrami field in a domain
$U\subseteq\RR^3$. Then it cannot have a first integral of class
$C^{2,\al}(U)$ with a regular level set diffeomorphic to
$\SS^2$. Furthermore, a (nonconstant) function $f\in C^{6,\al}(U)$ cannot be a first integral of $u$ unless it satisfies the
equation $P[f]=0$, where $P$ is a nonlinear differential operator of
sixth order that
does not depend on the particular Beltrami field $u$ and which can be
computed explicitly.
\end{theorem}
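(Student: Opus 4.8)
The plan is to revisit the proofs of Theorems~\ref{T.spheres} and~\ref{T.main} and to notice that the only structural ingredients they use are: that $f$ is a first integral of $u$ whose gradient does not vanish, so that the coordinate system $(t,\xi)$ of Section~\ref{S.evolution} is well defined and the dual $1$-form reads $\be=\be_i(t,\xi)\,d\xi_i$; and that, in those coordinates, the relation between $d_{\RR^3}\be$ and $*_{\RR^3}\be$ carries a factor which is a nowhere-vanishing function of $(t,\xi)$. For a strong Beltrami field admitting $f$ as a first integral, the first of these holds by hypothesis, while the equation $d_{\RR^3}\be=\la\,*_{\RR^3}\!\be$ shows that the second holds with the factor $1+t$ of Eq.~\eqref{ft2} replaced throughout by the nonzero constant $\la$. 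I would then check that every subsequent step carries over under this single substitution, noting along the way that $u$ is automatically divergence-free, hence an eigenfunction of the Laplacian ($\De u+\la^2u=0$) and real-analytic, so that the unique continuation property is still in force.

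Concretely, I would first record that the system~\eqref{dim2a}--\eqref{constraint2} becomes the same system with $1+t$ replaced by $\la$, so that~\eqref{pdtal}--\eqref{dal=0} hold with $T(t)\be=-\la\,\chi(t,\xi)\,*_t\!\be$ and with the components of $T$ given by~\eqref{Tcomp} after the same replacement. Proposition~\ref{P.constraint} then applies verbatim, since its proof uses only the evolution equation~\eqref{pdtal} and the constraint $d\be=0$. For the first assertion of the theorem I would repeat the proof of Theorem~\ref{T.spheres} essentially word for word: on a level set of $f$ diffeomorphic to $\SS^2$ one has $\be=d\psi$, the computation of $d(T\be)$ now produces $-\la\chi\big[\De_t\psi+\langle\nabla_t(\log\chi),\nabla_t\psi\rangle_t\big]\,\mathrm{Vol}_t$, which must vanish by Proposition~\ref{P.constraint}; since $\la\neq0$ and $\chi\neq0$, the maximum principle on the closed surface $\Si_t$ forces $\psi$ to depend only on $t$, so that $\be\equiv0$ near $\Si$ and $u\equiv0$ everywhere by unique continuation.

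For the second assertion I would reproduce the proof of Theorem~\ref{T.main}: the entry $T^2_1=\la\chi|g|^{1/2}g^{22}$ is nowhere zero, so $\pd_2\be_2$ can be isolated exactly as in~\eqref{pd2be2}; the vectors $\cT_n$ of~\eqref{cTn} are then built from the ($\la$-dependent) tensors $T_n$, the constraints granted by Proposition~\ref{P.constraint} read $\cT_n\cdot\Ga=0$ for all $n\geq2$, and the failure of $(\cT_2,\cT_3,\cT_4,\cT_5)$ to have maximal rank produces a sixth-order differential constraint $P[f]=0$, with $P$ defined on the initial surface by the analogue of~\eqref{detPf} and extended to a neighborhood as in Theorem~\ref{T.main}. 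By construction this $P$ is explicitly computable and depends only on $f$ and on the constant $\la$, not otherwise on $u$; moreover the substitution $u(x)\mapsto u(x/\la)$, $f(x)\mapsto f(x/\la)$ sends a strong Beltrami field with constant $\la$ and first integral $f$ to one with constant $1$ and first integral $f(x/\la)$, which shows that the dependence on $\la$ may be absorbed and that one may take a single operator $P$ valid for all strong Beltrami fields.

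The step I expect to be the real obstacle is verifying that this substitution does not trivialize the obstruction, i.e.\ that the resulting $P$ is not identically zero. The tensors $T_n$ arising here are genuinely different from those in Theorem~\ref{T.main} --- they are not the $t=\mathrm{const}$ specialization of the latter, since the recursion $T_{n+1}=\pd_tT_n+T_nT$ still feels the $t$-dependence of $\chi$ and $g_{ij}$ --- so nontriviality has to be checked afresh. I would do this exactly as in Propositions~\ref{P.example1} and~\ref{P.example2}: taking, say, $f(x)=1+x_1^2+ax_2^2+x_3$ and running the same computation but with the coefficient $1+t$ in the tensor $T$ replaced by the constant $1$ throughout, one finds that $\det(\cT_2,\cT_3,\cT_4,\cT_5)$ has a nonzero quadratic part in $\xi$ for generic $a$, hence $P\not\equiv0$.
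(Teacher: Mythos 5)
Your overall strategy is the paper's own: Theorem~\ref{T.strong} is obtained there by simply observing that the proofs of Theorems~\ref{T.spheres} and~\ref{T.main} carry over verbatim once the factor $1+t$ coming from~\eqref{ft2} is replaced by the constant $\la$, and your treatment of the spherical case, of Proposition~\ref{P.constraint}, and of the rank/determinant argument is a faithful and correctly detailed implementation of that observation. Your further remark that nontriviality of the resulting operator must be rechecked (the tensors $T_n$ are not those of Theorem~\ref{T.main}) and your plan to verify it on an explicit $f$ are sensible.

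The gap is the step where you claim that the dependence on $\la$ ``may be absorbed'' by the rescaling $u(x)\mapsto u(x/\la)$, $f(x)\mapsto f(x/\la)$, so that a single operator $P$ works for all strong Beltrami fields. What the rescaling actually gives is the equivalence of $P_\la[f]\equiv 0$ on $U$ with $P_1[f(\cdot/\la)]\equiv 0$ on $\la U$; indeed one checks that $P_1[f(\cdot/\la)]$ at $\la x$ equals $P_\la[f]$ at $x$ up to a nonzero power of $\la$. Written as a condition on $f$ itself, this is $Q_\la[f]=0$, where $Q_\la$ is the operator obtained from $P_1$ by replacing every $k$-th order derivative of $f$ by $\la^{-k}$ times that derivative. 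Since $T_n=\sum_{k=1}^n\la^k C_{n,k}$, with extreme terms $\pd_t^{\,n-1}A$ and $A^n$ for $A:=-\chi\,{*_t}$, the entries of the vectors~\eqref{cTn} and hence the determinant~\eqref{detPf} mix contributions with different total numbers of derivatives of $f$; so there is no reason for $Q_\la$ to be proportional to $P_1$, and ``$f$ satisfies $Q_\la[f]=0$ for the unknown constant $\la$ attached to $u$'' is a one-parameter family of sixth-order conditions, not a single equation $P[f]=0$ with $P$ independent of $u$. As written, your argument therefore proves the statement with an operator $P_\la$ depending on the proportionality constant; to reach the literal claim that $P$ does not depend on the particular Beltrami field you would need an extra ingredient, e.g.\ eliminating $\la$ between two constraints of the hierarchy $\det(\cT_i,\cT_j,\cT_k,\cT_l)=0$ (at the cost of one more derivative) or establishing a weighted homogeneity of the determinant that makes $Q_\la$ a multiple of $P_1$ — neither of which appears in your proposal. (The paper's one-line proof does not spell this point out either, but since you explicitly set out to justify the $u$-independence, the rescaling step as stated does not deliver it.)
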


Notice that the assertion that a first integral of a Beltrami field
cannot have a level set diffeomorphic to the sphere is
reminiscent of (and somehow complementary to) Arnold's structure
theorem~\cite{Ar66} for steady solutions of the Euler equation with
nonconstant Bernoulli function (that is, for solutions where $u$ and $\curl u$
are not collinear). In this case, the compact level
sets of the Bernoulli function must be tori.

\subsection{Further differential constraints}

We saw in the proof of Theorem~\ref{T.main} that the differential
operator $P$ that yields the constraints for the function~$f$ is given
in terms of the 4-component vectors $\cT_n$ defined in~\eqref{cTn}
via
\[
P[f]|_\Si:=\det(\cT_2,\cT_3,\cT_4,\cT_5)\big|_{t=0}\,.
\]

An important observation is that the proof of Theorem~\ref{T.main}
actually gives more information that the statement of the
theorem. Actually, it is a straightforward
consequence of Eq.~\eqref{cTnGa} that $f$ must also satisfy the
differential equation
$P_{ijkl}[f]=0$, where we set
\[
P_{ijkl}[f]|_\Si:=\det(\cT_i,\cT_j,\cT_k,\cT_l)\big|_{t=0}
\]
for any integers 
\[
l>k>j>i\geq 2\,,
\]
provided that $f$ is smooth enough (e.g., of class $C^{l+1,\al}$). Therefore, one
would expect to have a hierarchy of differential constraints on a
smooth $f$ to admit nontrivial solutions. Notice that proving the
independence of the resulting system of constraints should be a
delicate problem due to the complexity of the expressions for
$P_{ijkl}$.

\subsection{Steady Euler flows in two dimensions}

Let us consider a vector field  $v=\nabla^{\perp}\psi$, with $\psi$ a scalar function. It is standard that $v$ is a steady solution of the
incompressible Euler equation in two dimensions if the stream
function~$\psi$ satisfies the equation
\[
\De \psi=F( \psi)
\]
for some function~$F$. It is well known that this equation always admits a nontrivial solution $\psi$ for
smooth enough~$F$ in any ball that is sufficiently small.

Therefore, it stems that there are no local obstructions to the
existence of steady Euler flows in two dimensions for any smooth
function $F$. This is in sharp contrast with the Beltrami solutions of
the steady Euler equation in $\RR^3$ (cf.\ Theorem~\ref{T.main}). It
is worth mentioning that an important recent contribution to the study of the
geometry of the space of steady solutions in two-dimensional domains
is~\cite{Sverak}, where Arnold's approach to Euler flows using
volume-preserving diffeomorphisms is revisited.

\section*{Acknowledgments}

The authors are indebted to Boris Khesin for valuable comments on the
manuscript. A.E.~and D.P.S.~are respectively supported by the Spanish MINECO through the
Ram\'on y Cajal program and by the ERC
grant~335079. This work is supported in part by the
grants~FIS2011-22566 (A.E.), MTM2010-21186-C02-01 (D.P.S.) and
SEV-2011-0087 (A.E.\ and D.P.S.).

\bibliographystyle{amsplain}

\begin{thebibliography}{99}\frenchspacing

\bibitem{Ar66}
  V.I. Arnold, Sur la g\'eom\'etrie diff\'erentielle des groupes de Lie de
  dimension infinie et ses applications \`a l'hydrodynamique des fluides
  parfaits, Ann. Inst. Fourier 16 (1966) 319--361. 


\bibitem{AK99}
V.I. Arnold, B. Khesin, {\em Topological methods in hydrodynamics},
Springer, New York, 1999.


\bibitem{Chandra}
G.E. Backus, S. Chandasekhar, On Cowling's theorem on the
impossibility of self-maintained axisymmetric homogeneous dynamos,
Proc. Nat. Acad. Sci. 42 (1956) 105--109.


\bibitem{Bineau}
M. Bineau, On the existence of force-free magnetic fields,
Comm. Pure Appl. Math. 25 (1972) 77--84. 

\bibitem{Lellis13}
T. Buckmaster, C. De Lellis, L. Sz\'ekelyhidi, Dissipative Euler flows with
Onsager-critical spatial regularity, {\tt arXiv:1404.6915}.

\bibitem{Sverak}
A. Choffrut, V. Sverak, Local structure of the set of steady-state
solutions to the 2D incompressible Euler equations,
Geom. Funct. Anal. 22 (2012) 136--201.

\bibitem{Constantin}
P. Constantin, A. Majda, The Beltrami spectrum for incompressible fluid flows, Comm. Math. Phys. 115 (1988) 435--456.

\bibitem{Lellis09}
C. De Lellis, L. Sz\'ekelyhidi, The Euler equations as a differential inclusion, Ann. of Math. 170 (2009) 1417--1436.

\bibitem{Dombre}
T. Dombre, U. Frisch, J.M. Greene, M. H\'enon, A. Mehr, A.M. Soward,
Chaotic streamlines in the ABC flows,
J. Fluid Mech. 167 (1986) 353--391. 

\bibitem{Annals}
A. Enciso, D. Peralta-Salas, Knots and links in steady solutions of
the Euler equation, {Ann. of Math.} 175 (2012) 345--367.

\bibitem{arxiv}
A. Enciso, D. Peralta-Salas, Existence of knotted vortex tubes in
steady Euler flows, {\tt arXiv:1210.6271}.


\bibitem{CMP00}
R. Kaiser, M. Neudert, W. von Wahl, On the existence of force-free magnetic fields with small nonconstant $\al$ in exterior domains, Comm. Math. Phys. 211 (2000) 111--136.

\bibitem{Yudovich}
A. Morgulis, V.I. Yudovich, G.M. Zaslavsky, Compressible helical flows, Comm. Pure Appl. Math. 48 (1995) 571--582. 

\bibitem{Nad14}
N. Nadirashvili, Liouville theorem for Beltrami flow,
Geom. Funct. Anal. 24 (2014) 916--921.

\bibitem{Pelz}
R.B. Pelz, V. Yakhot, S.A. Orszag, L. Shtilman,
E. Levich,Velocity-vorticity patterns in turbulent flow,
Phys. Rev. Lett. 54 (1985) 2505--2508.
    


\end{thebibliography}

\end{document}